\documentclass[11pt,a4paper]{article}
\usepackage{a4wide}
\usepackage[utf8]{inputenc}
\usepackage[T1]{fontenc}
\usepackage{amsmath,amssymb,amsfonts}
\usepackage{mathtools}
\mathtoolsset{showonlyrefs=true}
\usepackage{amsthm}
\usepackage{graphicx}
\usepackage{booktabs}
\usepackage{algorithmic}
\usepackage{xcolor}
\usepackage{cite}
\usepackage{url}
\usepackage[hidelinks]{hyperref}
\usepackage{microtype}
\usepackage{mathrsfs}
\DeclareMathOperator{\diag}{diag}

\DeclareMathOperator{\grad}{grad}

\DeclareMathOperator{\qf}{qf}

\DeclareMathOperator{\re}{Re}

\let\skew\relax
\DeclareMathOperator{\skew}{skew}
\DeclareMathOperator{\Skew}{Skew}

\DeclareMathOperator{\St}{St}
\DeclareMathOperator{\her}{her}
\DeclareMathOperator{\Her}{Her}

\DeclareMathOperator{\tr}{tr}
\def\D{\mathrm{D}}
\def\id{\mathrm{id}}

\def\C{\mathbb{C}}
\def\F{\mathbb{F}}
\def\H{\mathbb{H}}
\def\R{\mathbb{R}}

\newtheorem{proposition}{Proposition}

\newtheorem{lemma}{Lemma}
\newtheorem{corollary}{Corollary}
\newtheorem{remark}{Remark}
\date{}
\title{\LARGE \bf
Riemannian optimization framework\\
on the generalized quaternionic Stiefel manifold
}
\author{Hiroyuki Sato\thanks{This work was supported by JSPS KAKENHI Grant Numbers JP25K07125, JP25K03082, and
JP24K14985.}
\thanks{Hiroyuki Sato is with Department of Mathematical Sciences,
        Ritsumeikan University, Shiga, Japan
        {\tt\small hsato@fc.ritsumei.ac.jp}}
}
    \makeatletter
    \let\NAT@parse\undefined
    \makeatother
\begin{document}
\maketitle
\begin{abstract}
This paper introduces the generalized quaternionic Stiefel manifold and studies its geometry for Riemannian optimization. We clarify its relationships with existing manifolds, especially the real generalized Stiefel manifold and the quaternionic Stiefel manifold, and derive explicit expressions for several geometric quantities on the proposed manifold. In particular, the generalized quaternionic Stiefel manifold is regarded as a real Riemannian manifold, and expressions for the tangent space, normal space, the orthogonal projection onto the tangent space, a retraction, and a vector transport on this manifold are derived. Some numerical experiments for the generalized quaternionic eigenvalue problem and a version of quaternionic canonical correlation analysis are performed by Riemannian optimization methods to demonstrate the viability of the proposed optimization framework.
\end{abstract}
\section{Introduction}
Riemannian optimization, which is optimization on Riemannian manifolds, is known as an effective tool for solving a wide class of optimization problems~\cite{Absil2008,Boumal2023,Sato2021}.
It can serve as an alternative to Euclidean optimization, i.e., the usual optimization in Euclidean spaces, and can also solve some problems that are hard to solve within the framework of Euclidean optimization; e.g., optimization on the Grassmann manifold can deal with linear subspaces of the Euclidean space as decision variables~\cite{Absil2008,Edelman1998}.
Let $n$ and $p$ be positive integers satisfying $p \leq n$.
The (real) Stiefel manifold, which is defined as\footnote{See Section~\ref{sec:notation} for detailed notation in this paper.}
\begin{equation}
\St(p,\R^n) \coloneqq \{X \in \R^{n \times p} \mid X^{\top}X = I_p\},
\end{equation}
is one of the most important Riemannian manifolds in the field of Riemannian optimization.
Optimization on the Stiefel manifold and on more generalized manifolds has been extensively studied~\cite{Edelman1998}.
There are many applications of optimization on the Stiefel manifold.
One example is the principal component analysis (PCA).
It is formulated as an optimization problem on the product of two Stiefel manifolds~\cite{Absil2008,SatoIwai2013}.
Furthermore, the Stiefel manifold has been generalized in the following two ways.
1) The Stiefel manifold $\St(p,\R^n)$ can be regarded as the set of orthonormal $p$-frames in $\R^n$ by identifying each $p$-frame $(x_1, x_2, \dots, x_p)$ with the $n \times p$ matrix $X = \begin{bmatrix}
    x_1 & x_2 & \cdots & x_p
\end{bmatrix}$, where orthonormality is defined with respect to the standard inner product in $\R^n$. Therefore, a direction to generalize the Stiefel manifold is to endow $\R^n$ with a more general inner product defined via a symmetric positive definite matrix $G \in \R^{n \times n}$.
The resulting manifold, called the generalized (real) Stiefel manifold~\cite{SatoAihara2019,ShustinAvron2023}, is
\begin{equation}
\St_G(p,\R^n) \coloneqq \{X \in \R^{n \times p} \mid X^{\top}GX = I_p\}.
\end{equation}
The Stiefel manifold $\St(p,\R^n)$ encodes the orthonormality of data with respect to the standard inner product, whereas the generalized Stiefel manifold $\St_G(p,\R^n)$ is useful for orthonormality with respect to the inner product defined by $G$.
One application is the generalized eigenvalue problem (GEVP)~\cite{ShustinAvron2023}.
Another example is the canonical correlation analysis (CCA)~\cite{Yger2012}, which is formulated as an optimization problem on the product of two generalized Stiefel manifolds of different dimensions.
2) We can consider $p$-frames in $\F^n$, where $\F \in \{\R, \C, \H\}$.
Here, $\C$ and $\H$ are the sets of all complex numbers and quaternions, respectively.
When $\F = \C$, the resulting manifold
\begin{equation}
\label{eq:complexStiefel}
\St(p,\C^n) \coloneqq \{X \in \C^{n \times p} \mid X^{H}X = I_p\}
\end{equation}
is called the complex Stiefel manifold.
When $\F = \H$, which is the main focus of this paper, the resulting manifold
\begin{equation}
\label{eq:quaternionStiefel}
\St(p,\H^n) \coloneqq \{X \in \H^{n \times p} \mid X^H X = I_p\}
\end{equation}
is called the quaternionic Stiefel manifold (also called the quaternion Stiefel manifold).
Here, $\cdot^H$ in~\eqref{eq:complexStiefel} (resp.~\eqref{eq:quaternionStiefel}) denotes the Hermitian conjugate of a complex (resp. quaternionic) matrix (see Section~\ref{sec:notation} for details).
Optimization on $\St(p,\H^n)$ has been studied very recently~\cite{HuangJiaLi2025,WangYang2026}.
An example of optimization on the complex (resp. quaternionic) Stiefel manifold is the PCA of the data expressed as complex numbers (resp. quaternions).
In this paper, combining the above two perspectives, we introduce the concept of the generalized quaternionic Stiefel manifold and study optimization on it.\footnote{Studies on the generalized complex Stiefel manifold are found in~\cite{sedano2022isometry}.}
Furthermore, from the above discussion, we can expect that optimization on this manifold has applications such as the GEVP and CCA for the given quaternionic data (matrices).
Here, we emphasize that a quaternion can well describe $3$- or $4$-dimensional data.
The organization of the paper is as follows.
In Section~\ref{sec:notation}, we describe the notation used in this paper and review some basic properties of quaternions.
In Section~\ref{sec:gqStiefel}, we investigate the geometry of the generalized quaternionic Stiefel manifold as a real Riemannian manifold.
Section~\ref{sec:applications} provides some numerical results of optimization on the proposed manifold, taking the GEVP and quaternionic CCA as applications.
Section~\ref{sec:conclusion} concludes the paper.
\section{Notation and review of quaternions}
\label{sec:notation}
Let $\H$ be the set of all quaternions.
A quaternion $q \in \H$ is a number expressed as
$q = q_0 + q_1 i + q_2 j + q_3 k$
with $q_0, q_1, q_2, q_3 \in \R$, where $i, j, k$ are the imaginary units satisfying
$i^2 = j^2 = k^2 = ijk = -1$.
Two quaternions do not generally commute, e.g., $ij = k \neq -k = ji$.
The conjugate, absolute value, and real part of $q = q_0 + q_1 i + q_2 j + q_3 k$ are defined by
$\bar{q} \coloneqq q_0 - q_1 i - q_2 j - q_3 k$, $\re(q) \coloneqq q_0$,
and $|q| \coloneqq \sqrt{q\bar{q}} = \sqrt{\bar{q}q} = \sqrt{q_0^2 + q_1^2 + q_2^2 + q_3^2}$, respectively.
Note that $\H$ is not a (commutative) field.
We regard $\H^n$ as a right $\H$-vector space, where the scalar product of a vector $x \in \H^n$ and a scalar $q \in \H$ is defined by $xq$.
Then, the standard inner product in $\H^n$ is defined by
$\langle x, y\rangle \coloneqq x^H y$ for $x, y \in \H^n$,
where $x^H$ is the Hermitian conjugate of $x$ (see below).
This satisfies, for any vectors $x, y, z \in \H^n$ and scalar $q \in \H$,
1) $\langle y, x\rangle = \overline{\langle x, y\rangle}$ (conjugate symmetry);
2) $\langle x, y + z\rangle = \langle x, y\rangle + \langle x, z\rangle$ and $\langle x, yq\rangle = \langle x, y\rangle q$ (right linearity in the second argument);
and 3) $\langle x, x\rangle \geq 0$, and $\langle x, x\rangle = 0 \iff x = 0$ (positive definiteness).
The induced norm of $x$ is then $\|x\| \coloneqq \sqrt{\langle x, x\rangle} = \sqrt{x^H x}$.
Throughout the paper, let $m, n, p$ be positive integers and assume $p \leq n$.
Let $\F$ be one of $\R$, $\C$, or $\H$.
The transpose of a matrix $A \in \F^{m \times n}$ is denoted as $A^{\top}$.
The Hermitian conjugate of $A \in \F^{m \times n}$ is
$A^H \coloneqq \bar{A}^{\top}$,
where $\bar{A}$ is the conjugate of $A$, whose $(r, s)$ element is the conjugate of the $(r, s)$ element of $A$.
Specifically,
the Hermitian conjugate of a quaternionic matrix $A = A_0 + A_1 i + A_2 j + A_3 k \in \H^{m \times n}$ with $A_0, A_1, A_2, A_3 \in \R^{m \times n}$ is $A^H = A_0^{\top} - A_1^{\top} i - A_2^{\top} j - A_3^{\top} k$.
For $A \in \H^{l \times m}$ and $B \in \H^{m \times n}$, we have $(AB)^H = B^H A^H$.
However, we have to note that $\overline{AB}$ is not necessarily equal to $\bar{A}\bar{B}$, and that $(AB)^{\top}$ is not necessarily equal to $B^{\top}A^{\top}$ due to the noncommutativity of quaternions.
The set $\H^{m \times n}$ can be endowed with a vector space structure over $\R$, and its (real) dimension is $4mn$ since the map $\psi \colon \H^{m \times n} \to (\R^{m \times n})^4$ defined by
$\psi(A) \coloneqq (A_0, A_1, A_2, A_3)$
for $A = A_0 + A_1 i + A_2 j + A_3 k$ is a linear isomorphism.
From these observations, we are led to the
identifications
$\H^{m \times n}
\simeq (\R^{m \times n})^4 \simeq \R^{4mn}$.
The standard inner product in the $4mn$-dimensional real vector space $\H^{m \times n}$ can be defined via the one in $(\R^{m \times n})^4$ as
$\langle X, Y\rangle \coloneqq \re(\tr(X^H Y)) = \sum_{l = 0}^3 \tr(X_l^{\top}Y_l)$
for $X = X_0 + X_1 i + X_2 j + X_3 k, Y = Y_0 + Y_1 i + Y_2 j + Y_3 k \in \H^{m \times n}$.
For $A \in \H^{m \times n}$ and $B \in \H^{n \times m}$,
we note a general formula
\begin{equation}
\re(\tr(AB)) = \re(\tr(BA))
\end{equation}
holds, while $\tr(AB)$ and $\tr(BA)$ are not necessarily equal due to the noncommutativity of quaternions.
A quaternionic matrix $A \in \H^{n \times n}$ is said to be Hermitian if $A^H = A$ and skew-Hermitian if $A^H = -A$.
The sets of all $n \times n$ Hermitian and skew-Hermitian quaternionic matrices are denoted by $\Her(n)$ and $\Skew(n)$, respectively.
The Hermitian and skew-Hermitian parts of $A \in \H^{n \times n}$ are denoted by
$\her(A) \coloneqq (A + A^H)/2$ and $\skew(A) \coloneqq (A - A^H)/2$,
respectively.
An important property is that any $S \in \Her(n)$ and $T \in \Skew(n)$ satisfy
$\re(\tr(ST)) = 0$
since we have
$\re(\tr(ST)) = \re(\tr((ST)^H))
= -\re(\tr(TS))
= -\re(\tr(ST))$.
\section{Generalized quaternionic Stiefel manifold}
\label{sec:gqStiefel}
In what follows, let $G \in \Her(n)$ be Hermitian positive definite, i.e., $G^H = G$ and $x^H Gx > 0$ for any nonzero $x \in \H^n$.
We define a quaternionic inner product as
\begin{equation}
\langle x, y\rangle_G \coloneqq x^H Gy, \quad x, y \in \H^n.
\end{equation}
The inner product $\langle \cdot, \cdot\rangle_G$ thus defined is shown to satisfy the three properties of a quaternionic inner product stated in the previous section.
Two vectors $x, y \in \H^n$ are said to be orthogonal if $0 = \langle x, y\rangle_G = x^H Gy$, and $x$ is said to be a unit vector if $1 = \|x\|_G \coloneqq \sqrt{\langle x, x\rangle_G}$.
Given this inner product and the induced norm $\|\cdot\|_G$, we consider an orthonormal $p$-frame $(x_1, x_2, \dots, x_p)$, i.e., $\langle x_r, x_s\rangle_G = \delta_{rs}$, where $\delta_{rs}$ is Kronecker's delta.
Expressing the $p$-frame by a matrix $X \coloneqq \begin{bmatrix}
x_1 & x_2 & \cdots & x_p
\end{bmatrix} \in \H^{n \times p}$, we have $X^H G X = I_p$.
Motivated by this observation, we define the \textit{generalized quaternionic Stiefel manifold} with respect to $G$ as
\begin{equation}
\St_G(p,\H^n) \coloneqq \{X \in \H^{n \times p} \mid X^H G X = I_p\}.
\end{equation}
In the remainder of this section, we show that $\St_G(p,\H^n)$ is a (real) embedded submanifold of the (real) $4np$-dimensional manifold $\H^{n \times p}$ and derive specific formulas for several geometric concepts, including the tangent and normal spaces.
For positive definite $G \in \Her(n)$, there exists a unique positive definite Hermitian square root $\sqrt{G}$ of $G$ such that $\sqrt{G}^2 = G$~\cite{WangYang2026,Zhang1997}.
Then, for $X \in \H^{n \times p}$, we have the equivalence
$X \in \St_G(p,\H^n)  \iff
X^H GX = I_p \iff (\sqrt{G}X)^H(\sqrt{G}X) = I_p \iff \sqrt{G}X \in \St(p,\H^n)$,
where $X^H \sqrt{G} =X^H\sqrt{G}^H = (\sqrt{G}X
)^H$ is used.
Therefore, defining $\Phi \colon \H^{n \times p} \to \H^{n \times p}$ as $\Phi(Y) \coloneqq \sqrt{G}^{-1}Y$, we have
$\St_G(p,\H^n) = \Phi(\St(p,\H^n))$.
We can regard the map $\Phi \colon \H^{n \times p} \to \H^{n \times p}$ as a map $\R^{4np} \to \R^{4np}$ by identifying $\Phi$ with $\psi \circ \Phi \circ \psi^{-1}$.
Since $\Phi$ is a linear isomorphism on $\R^{4np}$, it is a diffeomorphism.
\begin{proposition}
The generalized quaternionic Stiefel manifold $\St_G(p,\H^n)$ is a $p(4n - 2p + 1)$-dimensional real submanifold of $\H^{n \times p}$.
Strictly speaking, $\psi(\St_G(p,\H^n))$ is an embedded submanifold of $\psi(\H^{n \times p}) \simeq \R^{4np}$.
\end{proposition}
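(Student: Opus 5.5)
The plan is to work directly with a constraint map and apply the regular level set theorem, rather than going through $\Phi$ and relying on a result for $\St(p,\H^n)$ that the paper has not stated. Define $F \colon \H^{n \times p} \to \Her(p)$ by $F(X) \coloneqq X^H G X - I_p$. Since $G \in \Her(n)$, $F(X)^H = F(X)$, so $F$ indeed takes values in $\Her(p)$, and $\St_G(p,\H^n) = F^{-1}(0)$. Through $\psi$ we regard $F$ as a smooth (polynomial) map between real vector spaces.

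First I would compute the real dimension of $\Her(p)$. A Hermitian quaternionic matrix has real diagonal entries, contributing $p$ real parameters. Its strictly upper triangular entries are arbitrary quaternions, contributing $4\cdot p(p-1)/2 = 2p(p-1)$ real parameters, and these determine the lower part. Hence $\dim_\R \Her(p) = p + 2p(p-1) = 2p^2 - p$. This gives the target dimension $4np - (2p^2 - p) = p(4n - 2p + 1)$.

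The main step is to show that $0$ is a regular value, i.e.\ that the differential
\begin{equation}
\D F(X)[\xi] = \xi^H G X + X^H G \xi = 2\her(X^H G \xi), \qquad \xi \in \H^{n\times p},
\end{equation}
is surjective onto $\Her(p)$ for every $X \in \St_G(p,\H^n)$. Given $S \in \Her(p)$, I take $\xi \coloneqq XS/2$. Then $X^H G \xi = X^H G X S/2 = S/2$, and therefore $\D F(X)[\xi] = 2\her(S/2) = S$, since $S^H = S$.

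The only care needed is with noncommutativity. I must verify that the expression for $\D F(X)$ is correct: expand $(X+t\xi)^H G (X+t\xi)$, using real $t$ and matrix products, and keep the order of all factors. I must also check that $(X^H G\xi)^H = \xi^H G X$, which uses $G^H = G$ and $(AB)^H = B^H A^H$. Both steps are routine.

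The regular level set theorem then shows that $\psi(\St_G(p,\H^n))$ is an embedded submanifold of $\R^{4np}$ of dimension $4np - \dim_\R\Her(p) = p(4n-2p+1)$.

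As a consistency check, one could alternatively apply the same argument with $G = I_n$ to obtain the result for $\St(p,\H^n)$. One then transports it via the linear diffeomorphism $\Phi$, using $\St_G(p,\H^n) = \Phi(\St(p,\H^n))$. Diffeomorphisms of $\R^{4np}$ map embedded submanifolds to embedded submanifolds of the same dimension.

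I expect no serious obstacle. The steps most prone to error are the real dimension count of $\Her(p)$ and keeping factor order correct in the surjectivity computation.
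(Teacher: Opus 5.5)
Your proof is correct, but it follows a different route from the paper.

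The paper never writes down a constraint map. It quotes from the literature that $\St(p,\H^n)$ is an embedded submanifold of $\R^{4np}$ of dimension $p(4n-2p+1)$. It then transports this through the linear isomorphism $\Phi(Y)=\sqrt{G}^{-1}Y$, using $\St_G(p,\H^n)=\Phi(\St(p,\H^n))$ and the fact that diffeomorphisms carry embedded submanifolds to embedded submanifolds of the same dimension. This is exactly your closing ``consistency check'', except that the base case $G=I_n$ is cited rather than proved.

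Your regular-level-set argument with $F(X)=X^HGX-I_p\in\Her(p)$ is self-contained. It needs neither the external result nor the Hermitian positive definite square root $\sqrt{G}$, whose existence rests on the spectral theorem for quaternionic Hermitian matrices. It also makes the count $\dim_{\R}\Her(p)=2p^2-p$ explicit. In addition, it gives the first description of the tangent space in the next proposition for free, as $\ker\D F(X)$.

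What the paper's route buys is the map $\Phi$ itself. The paper reuses $\Phi$ for the $XB+X_\perp C$ parametrization of the tangent space and for the QR-based retraction, and it gives an explicit link to $\St(p,\H^n)$.

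The only point you leave implicit is that $\St_G(p,\H^n)\neq\emptyset$, which the dimension claim needs in order to have content. It follows, e.g., by $G$-orthonormalizing $e_1,\dots,e_p$ with Gram--Schmidt, using right scalar multiplication.
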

\begin{proof}
According to~\cite{WangYang2026}, $\St(p,\H^n)$ is an embedded submanifold of $\H^{n \times p} \simeq \R^{4np}$, i.e., $\psi(\St(p,\H^n))$ is a submanifold of $\psi(\H^{n \times p}) \simeq \R^{4np}$.
From general theory~\cite{Tu2011}, $(\psi \circ \Phi \circ \psi^{-1})(\psi(\St(p,\H^n))) = \psi(\St_G(p,\H^n))$ is a submanifold of $\R^{4np}$ since $\psi \circ \Phi \circ \psi^{-1}$ is a diffeomorphism.
Further, the dimension of $\St_G(p,\H^n)$ is the same as that of $\St(p,\H^n)$, which is $p(4n-2p+1)$~\cite[Theorem 3.1]{WangYang2026}.
\end{proof}
\begin{proposition}
The tangent space of $\St_G(p,\H^n)$ at $X$ is
$T_X\!\St_G(p,\H^n) = \{\xi \in \H^{n \times p} \mid \xi^H G X + X^H G \xi = 0\}$.
Furthermore, letting $X_{\perp} \in \H^{n \times (n-p)}$ be an arbitrary matrix satisfying $X_{\perp}^H GX_{\perp} = I_{n-p}$ and $X^H GX_{\perp} = 0$, we have
$T_X\!\St_G(p,\H^n\hspace{-.1em})
\!=\! \{XB \hspace{-.1em}+\hspace{-.1em} X_{\perp}C \hspace{.1em}|\hspace{.1em} B \!\in\! \Skew(p), C \!\in\! \H^{(n-p) \hspace{.1em}\!\times\!\hspace{.1em} p\hspace{-.1em}}\}$.
\end{proposition}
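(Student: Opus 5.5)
The plan is to prove the two descriptions in turn: the first by an inclusion argument closed off by a dimension count, and the second by a change of basis to the $G$-orthonormal columns of $[X,\ X_\perp]$.

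\emph{First description.} Define $F\colon \H^{n\times p}\to \Her(p)$ by $F(X)\coloneqq X^H G X - I_p$. This is a smooth map between real vector spaces, and $\St_G(p,\H^n)=F^{-1}(0)$.

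For a smooth curve $\gamma$ in $\St_G(p,\H^n)$ with $\gamma(0)=X$ and $\gamma'(0)=\xi$, differentiate $\gamma(t)^H G\gamma(t)=I_p$ at $t=0$. Since $G$ has constant entries and the product rule holds entrywise for quaternionic matrices (in the correct order), this gives $\xi^H G X + X^H G\xi = 0$. Hence $T_X\!\St_G(p,\H^n)\subseteq \ker \D F(X)$, where $\D F(X)[\xi]=\xi^H G X+X^H G\xi$.

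To get equality, we compare real dimensions. The image of $\D F(X)$ lies in $\Her(p)$, and its real dimension is $p + 4\cdot p(p-1)/2 = 2p^2 - p$. Surjectivity onto $\Her(p)$ holds because, for $S\in\Her(p)$, the choice $\xi = XS/2$ gives
\[
\D F(X)[XS/2] = \tfrac12 (S^H X^H G X + X^H G X S) = S.
\]
Therefore $\dim\ker \D F(X) = 4np - (2p^2-p) = p(4n-2p+1)$. By the previous proposition this equals $\dim \St_G(p,\H^n)$, so the inclusion is an equality.

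\emph{Second description.} The main obstacle is showing that $[X,\ X_\perp]$ is a quaternionic basis of $\H^n$, since we cannot use determinants over $\H$. Set $Q\coloneqq[X,\ X_\perp]\in\H^{n\times n}$. The hypotheses give $Q^H G Q = I_n$, so $(Q^H G)Q = I_n$. Pass to the real representation $\psi$, or equivalently to the standard real $4n\times 4n$ matrix representation. This representation is multiplicative, so a one-sided inverse is two-sided by real linear algebra, and $Q$ is invertible with $Q^{-1}=Q^H G$.

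Consequently, every $\xi\in\H^{n\times p}$ can be written uniquely as $\xi = XB + X_\perp C$. The coefficients are obtained by left multiplication with $X^H G$ and $X_\perp^H G$:
\[
B = X^H G\xi, \qquad C = X_\perp^H G \xi .
\]

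Now substitute this decomposition into the tangency condition. Using $X^H G X = I_p$ and $X^H G X_\perp = 0$, we get
\[
X^H G\xi = B, \qquad \xi^H G X = B^H .
\]
So the condition $\xi^H G X + X^H G\xi = 0$ becomes exactly $B^H + B = 0$, that is, $B\in\Skew(p)$, while $C$ is unconstrained. This yields the stated parametrization.

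As a consistency check, the real dimension of this set is $\dim\Skew(p) + 4(n-p)p = (2p^2+p) + 4np - 4p^2 = p(4n-2p+1)$, which matches the dimension from the first part.
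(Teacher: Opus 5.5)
Your proof is correct, but it takes a different route from the paper. The paper never linearizes the constraint $X^HGX=I_p$ itself. Instead it pushes everything through the linear diffeomorphism $\Phi(Y)=\sqrt{G}^{-1}Y$, writes $T_X\St_G(p,\H^n)=\sqrt{G}^{-1}\,T_{\sqrt{G}X}\St(p,\H^n)$, and substitutes both known descriptions of the tangent space of the standard quaternionic Stiefel manifold, with $Y_\perp=\sqrt{G}X_\perp$ as the complement. That is very short, but it depends on the existence of the Hermitian square root $\sqrt{G}$ and on the imported results for $\St(p,\H^n)$. In particular, the fact that a square quaternionic matrix with orthonormal columns is invertible is absorbed into the citation.

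Your argument is intrinsic. For the first description you use the kernel of $\D F(X)$ plus a dimension count. For the second you use a $G$-orthonormal change of basis, and you explicitly prove that $Q=[X,\ X_\perp]$ is invertible with $Q^{-1}=Q^HG$. That is exactly the point a direct proof must address, and your real-representation argument handles it. Strictly, the paper's $\psi$ is only a real-linear isomorphism, not multiplicative; what you actually use is the real $4n\times 4n$ matrix of $x\mapsto Qx$, which you also mention.

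Your computation $\D F(X)[XS/2]=S$ already shows that $0$ is a regular value of $F\colon\H^{n\times p}\to\Her(p)$. The regular level set theorem then gives at once that $\St_G(p,\H^n)$ is an embedded submanifold of dimension $4np-(2p^2-p)=p(4n-2p+1)$ with tangent space $\ker\D F(X)$. This would let you drop both the curve-inclusion step and the appeal to the previous proposition. The treatment would then be independent of $\sqrt{G}$ and of the quaternionic Stiefel literature altogether.
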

\begin{proof}
For any $Y, Z \in \H^{n \times p}$, it follows immediately from $\Phi(Y) = \sqrt{G}^{-1}Y$ that $\D \Phi(Y)[Z] = \sqrt{G}^{-1}Z$.
Here, for $X \in \St_G(p,\H^n)$, let $Y \coloneqq \Phi^{-1}(X) = \sqrt{G}X \in \St(p,\H^n)$.
Then, we have
\begin{align}
T_{X}\!\St_G(p,\H^n)={}&
T_{\Phi(Y)}\Phi(\St(p,\H^n))\\
={}&
 \D \Phi(Y)(T_Y\! \St(p,\H^n))
=
 \{\sqrt{G}^{-1}\eta \mid \eta \in T_Y\!\St(p,\H^n)\}\notag\\
={}&
 \{\xi \in \H^{n \times p} \mid \sqrt{G}\xi \in T_{\sqrt{G}X}\!\St(p,\H^n)\}\\
={}&
 \{\xi \in \H^{n \times p} \mid (\sqrt{G}\xi)^H(\sqrt{G}X) + (\sqrt{G}X)^H(\sqrt{G}\xi) = 0\}\notag\\
={}& \{\xi \in \H^{n \times p} \mid \xi^HGX + X^HG\xi = 0\}.
\end{align}
Analogous to the relationship between $X$ and $Y$, letting $Y_{\perp} \coloneqq \sqrt{G}X_{\perp}$, we have $Y_{\perp}^H Y_{\perp} = X_{\perp}^H GX_{\perp} = I_{n-p}$ and $Y^HY_{\perp} = X^HGX_{\perp} = 0$.
Thus, from~\cite{WangYang2026}, we have $T_Y\!\St(p,\H^n) \!=\! \{YB + Y_{\perp}C \!\mid\! B \in \Skew(p), C \in \H^{(n-p) \times p}\}$.
It follows that
\begin{align}
T_X\!\St_G(p,\H^n)
={}& \D \Phi(Y)(T_Y\!\St(p,\H^n))\\
={}& \{\sqrt{G}^{-1}(YB + Y_{\perp}C) \mid B \in \Skew(p), C \in \H^{(n-p) \times p}\}\\
={}& \{XB + X_{\perp}C\mid B \in \Skew(p), C \in \H^{(n-p) \times p}\}.
\end{align}
This completes the proof.
\end{proof}
We consider a Riemannian metric in the quaternionic matrix space $\H^{n \times p}$ defined by
\begin{equation}
\label{eq:H_RiemannianMetric}
\langle \xi, \eta\rangle_X \coloneqq \re(\tr(\xi^H M_X \eta)), \quad \xi, \eta \in \H^{n \times p}, \ X \in \H^{n \times p},
\end{equation}
where the map $\H^{n \times p} \ni X \mapsto M_X \in \Her(n)$ is smooth and $M_X$ is positive definite for all $X$.
Then, we can endow $\St_G(p,\H^n)$ with the induced metric
\begin{equation}
\label{eq:RiemannianMetric}
\langle \xi, \eta\rangle_X \coloneqq \re(\tr(\xi^H M_X \eta))
\end{equation}
for $ X \in \St_G(p,\H^n)$ and $\xi, \eta \in T_X \! \St_G(p,\H^n)$.
With this Riemannian metric, $\St_G(p,\H^n)$ is a Riemannian submanifold of $\H^{n \times p}$.
\begin{remark}
The matrices $G$ and \(M_X\) have different roles.
The matrix $M_X$ defines the Riemannian metric
and is an algorithmic choice.
The choice $M_X\equiv G$ is a natural one and leads to the simple projection formula in Corollary 1 below, whereas other choices can also be used to incorporate preconditioning or canonical-type geometry.
Regarding the real case, such non-standard metrics are used as Riemannian preconditioners~\cite{ShustinAvron2023}; hence allowing a general $M_X$ in our setting is also expected to be useful from an algorithmic point of view.
We observe this in Section~\ref{subsec:GEVP}.
\end{remark}
\begin{proposition}
\label{prop:normal}
The normal space of $\St_G(p,\H^n)$ at $X$ is
$N_X\!\St_G(p,\H^n) = \{M_X^{-1}GXS \mid S \in \Her(p)\}$,
where the normal space $N_X\!\St_G(p,\H^n)$ is defined to be the orthogonal complement  of the tangent space $T_X\!\St_G(p,\H^n)$ with respect to the inner product~\eqref{eq:H_RiemannianMetric} in $T_X \H^{n \times p} = \H^{n \times p}$.
\end{proposition}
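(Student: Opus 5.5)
The plan is to show the inclusion $\{M_X^{-1}GXS \mid S \in \Her(p)\} \subseteq N_X\!\St_G(p,\H^n)$ and then conclude equality by a real-dimension count.

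For the inclusion, take $S \in \Her(p)$ and $\xi \in T_X\!\St_G(p,\H^n)$. Using $M_X^H = M_X$, we compute
\[
\langle \xi, M_X^{-1}GXS\rangle_X = \re(\tr(\xi^H M_X M_X^{-1} G X S)) = \re(\tr(\xi^H G X S)).
\]
By the preceding proposition, $\xi^H G X + X^H G \xi = 0$, so $A \coloneqq \xi^H G X$ satisfies $A^H = X^H G \xi = -A$; that is, $A \in \Skew(p)$. The quantity above is then $\re(\tr(AS))$ with $A$ skew-Hermitian and $S$ Hermitian, which vanishes by the property recorded in Section~\ref{sec:notation}. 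Hence every $M_X^{-1}GXS$ is normal.

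For the dimension count, the normal space has real dimension
\[
4np - p(4n-2p+1) = p(2p-1),
\]
using the dimension from the first proposition. On the other hand, $\Her(p)$ has real dimension $p + 4\cdot p(p-1)/2 = 2p^2 - p$: the diagonal entries are real, and each off-diagonal pair contributes one free quaternion. These two numbers agree. The map $S \mapsto M_X^{-1}GXS$ is $\R$-linear, so it remains only to check that it is injective.

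Injectivity is the main step needing care, though it is short. Suppose $M_X^{-1}GXS = 0$. Multiplying on the left by $X^H M_X$ gives $X^H G X S = S = 0$, since $X^H G X = I_p$. Thus the image has dimension $p(2p-1)$. It is a subspace of $N_X\!\St_G(p,\H^n)$ of the same finite real dimension, so the two coincide.

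The only subtleties are the noncommutativity issues. The first is the identity $\re(\tr(AS)) = 0$, which is already available from Section~\ref{sec:notation}. The second is that the normal space is the orthogonal complement in a real inner product space of dimension $4np$, so the complement dimension formula applies over $\R$.
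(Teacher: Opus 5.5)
Your proof is correct, and it takes a different route from the paper's.

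The paper does not count dimensions. It fixes a $G$-orthonormal complement $X_\perp$ and writes an arbitrary $Y \in \H^{n\times p}$ as $Y = M_X^{-1}GXD + M_X^{-1}GX_\perp E$. It then uses the parametrization $\xi = XB + X_\perp C$ of the tangent space from the second proposition, which reduces $\langle \xi, Y\rangle_X$ to $\re(\tr(B^HD + C^HE))$. Requiring this to vanish for all $B \in \Skew(p)$ and all $C$ forces $E = 0$ and $\skew(D) = 0$, which gives both inclusions at once.

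You prove only the inclusion $\{M_X^{-1}GXS \mid S\in\Her(p)\} \subseteq N_X\!\St_G(p,\H^n)$ directly. For this you use the implicit description $\xi^HGX + X^HG\xi = 0$ and the fact that the real trace pairing of a Hermitian and a skew-Hermitian matrix vanishes. You then get equality by three ingredients: the identity $\dim_\R \Her(p) = 2p^2 - p = 4np - p(4n-2p+1)$, the injectivity of $S \mapsto M_X^{-1}GXS$, and the observation that $\langle\cdot,\cdot\rangle_X$ is a genuine real inner product, so the complement has the expected dimension.

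The trade-off is as follows. Your argument avoids $X_\perp$ entirely and needs only the first description of the tangent space. In exchange, it relies on the manifold dimension from the first proposition (imported from the cited literature) and on the count of $\dim_\R\Her(p)$. The paper's argument needs the existence of $X_\perp$ and the explicit tangent-space parametrization, but no dimension formula. It also shows concretely that the orthogonality condition splits into an $X_\perp$-component that must vanish and an $X$-component that must be Hermitian.

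A minor point: the remark ``using $M_X^H = M_X$'' in your first computation is not needed. There you only cancel $M_X M_X^{-1}$.
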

\begin{proof}
Let $X_{\perp} \in \H^{n \times (n-p)}$ satisfy $X_{\perp}^H GX_{\perp} = I_{n-p}$ and $X^H G X_{\perp} = 0$.
Then, any $Y \in \H^{n \times p}$ is written as
$Y = M_X^{-1}GX D + M_X^{-1}GX_{\perp}E$
for some $D \in \H^{p \times p}$ and $E \in \H^{(n-p) \times p}$.
For $\xi = X B + X_{\perp}C \in T_X\!\St_G(p,\H^n)$, we have
\begin{equation}
\langle \xi, Y\rangle_X = \re(\tr(\xi^H M_XY))
= \re(\tr(B^H D + C^H E)),
\end{equation}
where $B \in \Skew(p)$ and $C \in \H^{(n-p) \times p}$.
The condition $Y \in N_X\!\St_G(p,\H^n)$ is equivalent to $\langle \xi, Y\rangle_X = 0$ for any $\xi \in T_X\!\St_G(p,\H^n)$, i.e., $\re(\tr(B^H D + C^H E)) = 0$ for any $B \in \Skew(p)$ and $C \in \H^{(n-p) \times p}$.
We directly obtain $E = 0$.
Furthermore, since $\re(\tr(B^HD)) = \re(\tr(B^H \her(D))) + \re(\tr(B^H\skew(D))) = \re(\tr(B^H\skew(D)))$, this is equal to $0$ for all $B \in \Skew(p)$ if and only if $\skew(D) = 0$, i.e., $D \in \Her(p)$.
This completes the proof.
\end{proof}
Before proceeding with the orthogonal projection to the tangent space, we prepare the following lemma.
\begin{lemma}
\label{lem:Sylvester}
Let $K, L \in \Her(p)$ and assume that $K$ is positive definite.
Then, the Sylvester equation
\begin{equation}
\label{eq:Sylvester_general}
KS + SK = L
\end{equation}
for $S \in \H^{p \times p}$ has a unique solution.
Furthermore, the solution is Hermitian.
\end{lemma}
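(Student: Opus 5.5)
The plan is to treat the map $\mathcal{L}\colon \H^{p\times p}\to\H^{p\times p}$, $\mathcal{L}(S) \coloneqq KS + SK$, as a real-linear operator on the $4p^2$-dimensional real vector space $\H^{p\times p}$. Existence and uniqueness together follow once we show that $\mathcal{L}$ is injective, since an injective linear map on a finite-dimensional real space is bijective. Hermitian-ness of the solution will then follow from uniqueness and a symmetry argument.

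\textbf{Injectivity.} Suppose $KS + SK = 0$. Multiply on the left by $S^H$ and take $\re\tr$ to get $\re(\tr(S^H K S)) + \re(\tr(S^H S K)) = 0$. Using $\re(\tr(AB)) = \re(\tr(BA))$, the second term equals $\re(\tr(S K S^H))$. Both terms are then of the form $\re(\tr(Z^H K Z))$, with $Z = S$ and $Z = S^H$ respectively.

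For a positive definite Hermitian $K$, we have $\re(\tr(Z^H K Z)) = \sum_r z_r^H K z_r \ge 0$, where $z_r$ are the columns of $Z$. Each $z_r^H K z_r$ is real and positive unless $z_r = 0$. Hence both terms are nonnegative, and their sum vanishing forces $\re(\tr(S^H K S)) = 0$, so $S = 0$. This is the only point needing care with quaternions. The concern is whether $z^H K z$ is real, and it is, because $\overline{z^H K z} = (z^H K z)^H = z^H K z$. So the step is routine, and the expected main obstacle is merely verifying the trace identities survive noncommutativity, which the paper's formula $\re(\tr(AB))=\re(\tr(BA))$ handles.

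\textbf{Hermitian solution.} Let $S$ be the unique solution. Taking Hermitian conjugates of $KS + SK = L$ and using $K^H = K$, $L^H = L$, and $(AB)^H = B^H A^H$ gives $S^H K + K S^H = L$. So $S^H$ also solves the equation, and uniqueness yields $S^H = S$, i.e., $S \in \Her(p)$.

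An alternative is to reduce to the eigendecomposition $K = U\Lambda U^H$ with $U$ quaternionic unitary and $\Lambda$ real positive diagonal. The entries then decouple as $(\lambda_r + \lambda_s)\tilde S_{rs} = \tilde L_{rs}$, solvable since real scalars commute with quaternions. I would use the injectivity argument above, since it avoids invoking the quaternionic spectral theorem.
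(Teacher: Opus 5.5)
Your proof is correct, but it takes a genuinely different route from the paper's.

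\textbf{The paper's route.} The paper uses the quaternionic spectral theorem, $K = U\Lambda U^H$ with $U$ unitary and $\Lambda$ real positive diagonal. This decouples the equation entrywise into $(\lambda_r+\lambda_s)t_{rs} = (U^HLU)_{rs}$ for $T = U^HSU$. Existence, uniqueness, and the Hermitian property of $S = UTU^H$ then all follow from this single explicit formula. The last property holds because $U^HLU$ is Hermitian and $\lambda_r+\lambda_s$ is real and symmetric in $r,s$.

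\textbf{Your route.} You prove coercivity:
$\re(\tr(S^H(KS+SK))) = \re(\tr(S^HKS)) + \re(\tr(SKS^H)) > 0$ for $S \neq 0$. Dimension counting over $\R$ then makes the real-linear map $S \mapsto KS+SK$ bijective. The Hermitian property follows from uniqueness, since $S^H$ solves the same equation.

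\textbf{What each buys.} Your version is more self-contained. It needs only $\re(\tr(AB)) = \re(\tr(BA))$, $(AB)^H = B^HA^H$, the reality of $z^HKz$, and the definition of positive definiteness. It does not appeal to the quaternionic spectral decomposition. The existence and uniqueness part also carries over verbatim to $KS + SK' = L$ with two different positive definite $K, K'$, although the Hermitian conclusion then needs $K = K'$. What you give up is constructiveness, since existence comes from rank--nullity. The paper's proof hands you a concrete solution formula, which is how one would actually solve the Sylvester equation when evaluating $P_X^{M_X}$ for a general metric $M_X$.
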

\begin{proof}
Since $K$ is Hermitian positive definite, it can be decomposed as
$K = U\Lambda U^H$,
where $U \in \H^{p \times p}$ is unitary ($U^H U = UU^H =  I_p$) and $\Lambda$ is a $p \times p$ diagonal matrix with positive diagonal elements $\lambda_1, \lambda_2, \dots, \lambda_p > 0$~\cite{WangYang2026,Zhang1997}.
Then, the Sylvester equation~\eqref{eq:Sylvester_general} reduces to $U\Lambda U^H S + SU\Lambda U^H = L$,
which is equivalent to
\begin{equation}
\label{eq:aux_Sylvester}
\Lambda (U^H SU) + (U^H SU)\Lambda = U^H L U.
\end{equation}
Let $T = [t_{rs}] \coloneqq U^H SU$.
Taking the $(r,s)$ elements of both sides of~\eqref{eq:aux_Sylvester} gives $\lambda_r t_{rs} + \lambda_s t_{rs} = (U^HLU)_{rs}$, which yields
\begin{equation}
\label{eq:tij}
t_{rs} =
(\lambda_r + \lambda_s)^{-1}(U^HLU)_{rs}.
\end{equation}
Therefore, the unique solution to \eqref{eq:Sylvester_general} is
$S = UTU^H$,
where the $(r,s)$ element of $T$ is given by~\eqref{eq:tij}.
Since $L$ and thus $U^H LU$ are Hermitian, so are $T$ and $S$.
\end{proof}
\begin{proposition}
Let $X \in \St_G(p,\H^n)$.
The orthogonal projection $P_X^{M_X} \colon \H^{n \times p} \to T_X\!\St_G(p,\H^n)$ onto the tangent space $T_X\!\St_G(p,\H^n)$ at $X$ is expressed as
\begin{equation}
\label{eq:proj_general}
P_X^{M_X}(Y) = Y - M_X^{-1}GXS,
\end{equation}
where $S \in \Her(p)$ is the solution to the Sylvester equation
\begin{equation}
\label{eq:Sylvester}
\her(X^H GM_X^{-1}GXS) = \her(X^H GY).
\end{equation}
\end{proposition}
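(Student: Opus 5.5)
We use the decomposition $\H^{n \times p} = T_X\!\St_G(p,\H^n) \oplus N_X\!\St_G(p,\H^n)$, which is orthogonal with respect to the metric~\eqref{eq:H_RiemannianMetric}. The orthogonal projection of $Y$ must then have the form $Y - \nu$, where $\nu$ is the unique normal vector with $Y - \nu \in T_X\!\St_G(p,\H^n)$. By Proposition~\ref{prop:normal}, every normal vector is $\nu = M_X^{-1}GXS$ for some $S \in \Her(p)$. So it suffices to determine which $S \in \Her(p)$ makes $Y - M_X^{-1}GXS$ tangent, and to show that this $S$ exists and is unique.

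For the tangency condition we use the first characterization of $T_X\!\St_G(p,\H^n)$: a matrix $\xi$ is tangent if and only if $\xi^H G X + X^H G \xi = 0$, i.e., $\her(X^H G\xi) = 0$. Substitute $\xi = Y - M_X^{-1}GXS$. Since $G$ and $M_X$ are Hermitian, $M_X^{-1}$ is Hermitian too, and the condition becomes
\begin{equation}
\her(X^H G Y) = \her(X^H G M_X^{-1} G X S),
\end{equation}
which is exactly~\eqref{eq:Sylvester}. Thus $Y - M_X^{-1}GXS$ lies in the tangent space precisely when the Hermitian $S$ solves~\eqref{eq:Sylvester}. It is automatically orthogonal to the normal space, since $M_X^{-1}GXS$ is normal.

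The main step is existence and uniqueness of a Hermitian solution, and for this we reduce to Lemma~\ref{lem:Sylvester}. Set $K \coloneqq X^H G M_X^{-1} G X \in \H^{p\times p}$; it is Hermitian. It is positive definite: $GX$ has full column rank because $X^H(GX) = I_p$, and $M_X^{-1}$ is positive definite, so $v^H K v = (GXv)^H M_X^{-1}(GXv) > 0$ for every $v \neq 0$. For Hermitian $S$ we have $(KS)^H = SK$, so $\her(KS) = (KS+SK)/2$. Equation~\eqref{eq:Sylvester} therefore reads $KS + SK = L$ with $L \coloneqq 2\her(X^H G Y) \in \Her(p)$. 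Lemma~\ref{lem:Sylvester} gives a unique solution $S \in \H^{p\times p}$, and this solution is Hermitian, so it is also the unique Hermitian solution of~\eqref{eq:Sylvester}.

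The remaining point is the identification $\her(KS) = (KS+SK)/2$, which is valid only for Hermitian $S$. This is why uniqueness is asserted within $\Her(p)$: the lemma's uniqueness over all of $\H^{p\times p}$ covers it. Since the normal component is therefore uniquely determined, $P_X^{M_X}(Y) = Y - M_X^{-1}GXS$ is the orthogonal projection, as claimed in~\eqref{eq:proj_general}.
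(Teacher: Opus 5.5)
Your proof is correct and follows essentially the paper's argument. You show that $K = X^HGM_X^{-1}GX$ is Hermitian positive definite in order to invoke Lemma~\ref{lem:Sylvester}. You then use $\her(X^HG\xi)=0$ for tangency and Proposition~\ref{prop:normal} for the normality of $M_X^{-1}GXS$. Beyond the paper, you make explicit two points it leaves implicit: that $GXv\neq 0$ for $v\neq 0$ (because $X^HGX=I_p$), and that $\her(KS)=(KS+SK)/2$ holds only for Hermitian $S$, so uniqueness is asserted within $\Her(p)$.
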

\begin{proof}
Note that $K \coloneqq X^H G M_X^{-1}GX$ is Hermitian positive definite since $M_X^{-1}$ is positive definite and thus $x^HKx = (GXx)^HM_X^{-1}(GXx) > 0$ holds for any nonzero $x \in \H^p$.
Therefore, from Lemma~\ref{lem:Sylvester}, the Sylvester equation~\eqref{eq:Sylvester} has a unique solution.
Let $Z \coloneqq Y - M_X^{-1}GXS$, where $S \in \Her(p)$ satisfies~\eqref{eq:Sylvester}.
Then, we have
\begin{equation}
\her(X^H G Z) = \her(X^H GY) - \her(X^H G M_X^{-1}GXS) = 0,\notag
\end{equation}
which implies $Z \in T_X \!\St_G(p,\H^n)$.
Furthermore, $Y - Z = M_X^{-1}GXS \in N_X\!\St_G(p,\H^n)$.
Therefore, $Z$ is the orthogonal projection of $Y$ to $T_X\!\St_G(p,\H^n)$, completing the proof.
\end{proof}
\begin{corollary}
When $M_X \equiv G$ for all $X \in \St_G(p,\H^n)$, it holds that
\begin{equation}
\label{eq:proj_G}
P_X^G(Y) = Y - X\her(X^HGY).
\end{equation}
\end{corollary}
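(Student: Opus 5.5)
The plan is to specialize the preceding proposition to $M_X \equiv G$ and solve the Sylvester equation~\eqref{eq:Sylvester} explicitly. With $M_X = G$ we have $M_X^{-1}G = I_n$. Hence the normal component $M_X^{-1}GXS$ in~\eqref{eq:proj_general} becomes simply $XS$. The left-hand side of~\eqref{eq:Sylvester} then becomes $\her(X^H G X S)$, which equals $\her(S)$ because $X^HGX = I_p$ for $X \in \St_G(p,\H^n)$.

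For Hermitian $S$ we have $\her(S) = S$, so~\eqref{eq:Sylvester} reads $S = \her(X^HGY)$. This $S$ is indeed Hermitian, so it is an admissible solution. By the uniqueness part of Lemma~\ref{lem:Sylvester} (applied with $K = X^HGG^{-1}GX = I_p$, which is Hermitian positive definite), it is the solution. Equivalently, in the form of the lemma, the equation is $\tfrac12(S + S) = L$ with $L = \her(X^HGY)$.

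Substituting $S = \her(X^HGY)$ into~\eqref{eq:proj_general} gives $P_X^G(Y) = Y - X\her(X^HGY)$, which is~\eqref{eq:proj_G}.

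As a sanity check, I would verify directly that $Z \coloneqq Y - X\her(X^HGY)$ satisfies the tangency condition of the earlier proposition. We have $\her(X^HGZ) = \her(X^HGY) - \her(\her(X^HGY)) = 0$, and this is equivalent to $Z^HGX + X^HGZ = 0$. Also $Y - Z = G^{-1}GX\,\her(X^HGY)$ lies in the normal space of Proposition~\ref{prop:normal}.

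There is no real obstacle here. The only point requiring care is noting that $\her$ acting on the Hermitian unknown $S$ is the identity, and that the constraint $X^HGX = I_p$ collapses $K$ to the identity.
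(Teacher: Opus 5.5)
Your proposal is correct and follows essentially the same route as the paper: with $M_X = G$ the left-hand side of the Sylvester equation reduces to $\her(X^HGXS) = \her(S) = S$, so $S = \her(X^HGY)$, which you substitute into the projection formula. Your added uniqueness remark and the direct tangency/normality check are valid but not needed beyond what the paper does.
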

\begin{proof}
When $M_X \equiv G$, the left-hand side of the Sylvester equation~\eqref{eq:Sylvester} is computed as
\begin{equation}
\her(X^H GM_X^{-1}GXS) = \her(X^HGXS) = \her(S) = S.
\end{equation}
Thus, the solution to~\eqref{eq:Sylvester} follows as $S = \her(X^H GY)$.
Substituting this and $M_X = G$ into~\eqref{eq:proj_general} yields~\eqref{eq:proj_G}.
\end{proof}
The Euclidean gradient of a real-valued function $\bar{f} \colon \H^{n \times p} \to \R$ is defined to satisfy, for any $Z \in \H^{n \times p}$,
$\re(\tr(\nabla \bar{f}(X)^H Z)) = \lim_{t \to 0} (\bar{f}(X+tZ) - \bar{f}(X)) / {t}$,
if the limit exists.
Using the expression $X = X_0 + X_1 i + X_2 j + X_3 k$ with $X_0, X_1, X_2, X_3 \in \R^{n \times p}$, $\bar{f}$ can also be regarded as a function of $(X_0, X_1, X_2, X_3) \in (\R^{n \times p})^4$.
For $X_l$, $l = 0, 1, 2, 3$, let $\nabla_{X_l}\bar{f}(X)$ denote the Euclidean gradient of $X_l \mapsto \bar{f}(X)$ with $X_u$ for $u \in \{0, 1, 2, 3\} - \{l\}$ being fixed.
Then, we have $\nabla \bar{f}(X) = \nabla_{X_0}\bar{f}(X) + \nabla_{X_1}\bar{f}(X)i + \nabla_{X_2}\bar{f}(X)j + \nabla_{X_3}\bar{f}(X)k$.
Subsequently, we examine the Riemannian gradient of a function $f \colon \St_G(p,\H^n) \to \R$ with respect to the Riemannian metric~\eqref{eq:RiemannianMetric} induced from~\eqref{eq:H_RiemannianMetric}.
To this end, we first investigate the Riemannian gradient in $\H^{n \times p}$.
\begin{proposition}
Consider the Riemannian manifold $\H^{n \times p}$ endowed with the Riemannian metric~\eqref{eq:H_RiemannianMetric}.
Then, the Riemannian gradient of a smooth function $\bar{f} \colon \H^{n \times p} \to \R$ is given by
$\grad \bar{f}(X) = M_X^{-1}\nabla \bar{f}(X)$,
where $\nabla \bar{f}$ is the Euclidean gradient of $\bar{f}$.
\end{proposition}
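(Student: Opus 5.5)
The plan is to verify directly that $M_X^{-1}\nabla \bar{f}(X)$ satisfies the defining property of the Riemannian gradient. With respect to the metric~\eqref{eq:H_RiemannianMetric}, $\grad \bar{f}(X)$ is the unique element of $T_X\H^{n \times p} = \H^{n \times p}$ such that
\begin{equation}
\langle \grad \bar{f}(X), Z\rangle_X = \D \bar{f}(X)[Z] \quad \text{for all } Z \in \H^{n \times p}.
\end{equation}
By the definition of the Euclidean gradient given just above, the right-hand side equals $\re(\tr(\nabla \bar{f}(X)^H Z))$.

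First, I will compute, for the candidate $W \coloneqq M_X^{-1}\nabla \bar{f}(X)$,
\begin{equation}
\langle W, Z\rangle_X = \re(\tr(W^H M_X Z)) = \re(\tr(\nabla \bar{f}(X)^H (M_X^{-1})^H M_X Z)).
\end{equation}
This step uses $(AB)^H = B^H A^H$ for quaternionic matrices, which was noted in Section~\ref{sec:notation}.

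The only point needing care is that $(M_X^{-1})^H = M_X^{-1}$. This holds because $M_X^{-1}M_X = I_n$; taking Hermitian conjugates gives $M_X^H (M_X^{-1})^H = I_n$, that is, $M_X (M_X^{-1})^H = I_n$. Since $M_X$ is invertible, being positive definite, it follows that $(M_X^{-1})^H = M_X^{-1}$. With this, $(M_X^{-1})^H M_X = I_n$, and so
\begin{equation}
\langle W, Z\rangle_X = \re(\tr(\nabla \bar{f}(X)^H Z)) = \D \bar{f}(X)[Z]
\end{equation}
for every $Z$.

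Finally, uniqueness follows because $\langle\cdot,\cdot\rangle_X$ is a (real) inner product on $\H^{n \times p}$, hence nondegenerate. Its positive definiteness comes from $\re(\tr(\xi^H M_X \xi)) = \sum_r \xi_r^H M_X \xi_r > 0$ for $\xi \neq 0$, where $\xi_r$ denote the columns. Therefore $\grad \bar{f}(X) = W$.

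There is no real obstacle here. The only subtle step is handling the noncommutativity: we must use the Hermitian conjugate rule $(AB)^H = B^H A^H$ rather than transposes, and confirm that the inverse of a Hermitian quaternionic matrix is Hermitian.
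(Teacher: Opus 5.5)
Your proof is correct and follows essentially the same route as the paper: both rest on the identity $\D\bar{f}(X)[Z] = \re(\tr(\nabla\bar{f}(X)^H Z))$ together with the Hermitian symmetry of $M_X$. The difference is only in direction. The paper rewrites $\langle \grad\bar{f}(X), Z\rangle_X$ as $\re(\tr((M_X\grad\bar{f}(X))^H Z))$ and concludes $M_X\grad\bar{f}(X) = \nabla\bar{f}(X)$ by nondegeneracy of the standard inner product, whereas you verify the candidate $M_X^{-1}\nabla\bar{f}(X)$ and invoke uniqueness via positive definiteness of $\langle\cdot,\cdot\rangle_X$.
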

\begin{proof}
For any $Z \in \H^{n \times p}$, we have
\begin{align}
\re(\tr(\nabla \bar{f}(X)^H Z)) &= \D \bar{f}(X)[Z]\\
&= \langle \grad \bar{f}(X), Z\rangle_X\\
&= \re(\tr(\grad \bar{f}(X)^HM_XZ))\\
&= \re(\tr((M_X \grad \bar{f}(X))^HZ)).
\end{align}
This implies $M_X \grad \bar{f}(X) = \nabla \bar{f}(X)$, i.e., $\grad \bar{f}(X) = M_X^{-1}\nabla \bar{f}(X)$.
\end{proof}
\begin{proposition}
\label{prop:grad}
The Riemannian gradient of smooth $f \colon \St_G(p,\H^n) \to \R$ with respect to~\eqref{eq:RiemannianMetric} is given by
$\grad f(X) = P^{M_X}_X(\grad \bar{f}(X)) = P_X^{M_X}(M_X^{-1}\nabla \bar{f}(X))$,
where $\bar{f} \colon \H^{n \times p} \to \R$ is a smooth extension of $f$, $P^{M_X}_X$ is the orthogonal projection~\eqref{eq:proj_general}, and $\nabla \bar{f}$ is the Euclidean gradient.
\end{proposition}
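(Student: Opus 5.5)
The plan is to use the standard fact about Riemannian submanifolds: the Riemannian gradient of the restriction is the orthogonal projection of the ambient Riemannian gradient onto the tangent space. I will verify this directly in the present setting rather than citing it, since the ambient metric~\eqref{eq:H_RiemannianMetric} varies with $X$.

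First fix $X \in \St_G(p,\H^n)$ and a smooth extension $\bar{f}$ of $f$, and take any $\xi \in T_X\!\St_G(p,\H^n)$. Choose a smooth curve $\gamma$ in $\St_G(p,\H^n)$ with $\gamma(0)=X$ and $\dot\gamma(0)=\xi$. Since $f\circ\gamma = \bar{f}\circ\gamma$, differentiating at $0$ gives
\begin{equation}
\D f(X)[\xi] = \D\bar{f}(X)[\xi] = \langle \grad\bar{f}(X), \xi\rangle_X,
\end{equation}
where the last equality is the definition of the ambient gradient. By the preceding proposition, $\grad\bar{f}(X) = M_X^{-1}\nabla\bar{f}(X)$.

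Next, decompose $\grad\bar{f}(X) = P_X^{M_X}(\grad\bar{f}(X)) + \bigl(\grad\bar{f}(X) - P_X^{M_X}(\grad\bar{f}(X))\bigr)$. By the proof of the projection proposition, the second term has the form $M_X^{-1}GXS$ with $S\in\Her(p)$, so it lies in $N_X\!\St_G(p,\H^n)$ by Proposition~\ref{prop:normal}. Its inner product with $\xi$ under $\langle\cdot,\cdot\rangle_X$ therefore vanishes. This gives
\begin{equation}
\D f(X)[\xi] = \langle P_X^{M_X}(\grad\bar{f}(X)), \xi\rangle_X
\end{equation}
for all tangent $\xi$. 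The vector $P_X^{M_X}(\grad\bar{f}(X))$ is itself tangent, and the induced metric~\eqref{eq:RiemannianMetric} is the restriction of~\eqref{eq:H_RiemannianMetric}. Uniqueness of the Riesz representer in the inner product space $T_X\!\St_G(p,\H^n)$ then yields $\grad f(X) = P_X^{M_X}(\grad\bar f(X))$. Substituting $\grad\bar f(X) = M_X^{-1}\nabla\bar{f}(X)$ gives the second expression.

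I expect no real obstacle. The one point needing care is that the orthogonality between the tangent space and the normal space is taken with respect to $\langle\cdot,\cdot\rangle_X$, which depends on the base point. This is consistent here because both the normal space and $P_X^{M_X}$ were defined using $M_X$ at the same point $X$.
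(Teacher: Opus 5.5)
Your proof is correct and follows the paper's route. The paper simply cites the general fact that, on a Riemannian submanifold, the Riemannian gradient is the orthogonal projection of the ambient gradient; you have written that argument out explicitly, using the normal-space and projection propositions to justify the orthogonal splitting at $X$. Your caution about the $X$-dependent metric is harmless but not needed, since the cited general result already holds for any ambient Riemannian metric and only the inner product at the single point $X$ enters.
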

\begin{proof}
The statement of the proposition is a direct consequence of general theory (e.g., \cite{Absil2008}) for the Riemannian gradient on a Riemannian submanifold.
\end{proof}
In Riemannian optimization, a retraction is utilized to generate a sequence on the manifold in question~\cite{Absil2008}.
A retraction $R \colon T\!\St_G(p,\H^n) \to \St_G(p,\H^n)$ on $\St_G(p,\H^n)$ should be defined to satisfy $R_X(0) = X$ and $\D R_X(0) = \id_{T_X \!\St_G(p,\H^n)}$, where $R_X \colon T_X\!\St_G(p,\H^n) \to \St_G(p,\H^n)$ is the restriction of $R$ to the tangent space $T_X\!\St_G(p,\H^n)$ and $\id_{T_X\!\St_G(p,\H^n)}$ is the identity map in $T_X\!\St_G(p,\H^n)$.
\begin{proposition}
We define a map $R^G \colon T \!\St_G(p,\H^n) \to \St_G(p,\H^n)$ by
$R^G_X(\eta) \coloneqq \sqrt{G}^{-1}\qf(\sqrt{G}(X + \eta))$
for $X \in \St_G(p,\H^n)$ and $\eta \in T_X \!\St_G(p,\H^n)$, where $\qf(A)$ denotes the Q-factor of the QR decomposition of a quaternionic matrix
$A \in \H^{n \times p}$ with full right column rank\footnote{A quaternionic matrix $A \in \H^{n \times p}$ is said to have full right column rank if the column vectors $a_1, a_2, \dots, a_p \in \H^n$ are right linearly independent, i.e., for $x \in \H^p$, $Ax = 0 \implies x = 0$.}, normalized so that the upper triangular R-factor has positive real diagonal entries.
Then, $R^G$ is a retraction on $\St_G(p,\H^n)$.
\end{proposition}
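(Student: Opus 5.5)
The plan is to transport the known QR-based retraction on the quaternionic Stiefel manifold through the diffeomorphism $\Phi$, in the same way the earlier propositions transported submanifold structure and tangent spaces. Recall the map $R_Y(\zeta) \coloneqq \qf(Y+\zeta)$ for $Y \in \St(p,\H^n)$ and $\zeta \in T_Y\!\St(p,\H^n)$. I intend to rely on~\cite{WangYang2026} for the fact that this is a retraction on $\St(p,\H^n)$, with the quaternionic QR decomposition normalized to have positive real diagonal entries in the R-factor. Then I will observe that
\[
R^G_X(\eta) = \Phi\bigl(R_{\Phi^{-1}(X)}(\D\Phi^{-1}(X)[\eta])\bigr),
\]
since $\Phi^{-1}(X)=\sqrt{G}X$ and $\D\Phi^{-1}(X)[\eta]=\sqrt{G}\eta$. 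By the previous proposition on tangent spaces, $\eta \in T_X\!\St_G(p,\H^n)$ if and only if $\sqrt{G}\eta \in T_{\sqrt{G}X}\!\St(p,\H^n)$, so the composition is well defined.

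First, I check well-definedness directly, in case the cited result is unavailable. Set $A=\sqrt{G}(X+\eta)$. Then
\[
A^HA = I_p + \eta^HGX + X^HG\eta + \eta^HG\eta = I_p + \eta^HG\eta,
\]
using the tangency condition. This matrix is Hermitian positive definite, so $Ax=0$ forces $x=0$ and $A$ has full right column rank. Hence the normalized quaternionic QR factorization exists and is unique. Existence comes from Gram--Schmidt with respect to the standard quaternionic inner product. Uniqueness comes from the fact that an upper triangular unitary matrix with positive real diagonal is the identity.

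Next, $Q=\qf(A)$ satisfies $Q^HQ=I_p$, so
\[
\bigl(\sqrt{G}^{-1}Q\bigr)^HG\bigl(\sqrt{G}^{-1}Q\bigr) = Q^HQ = I_p,
\]
and therefore $R^G_X(\eta)\in\St_G(p,\H^n)$. For $\eta=0$ we have $\qf(\sqrt{G}X)=\sqrt{G}X$, because $\sqrt{G}X$ already has orthonormal columns with R-factor $I_p$. Thus $R^G_X(0)=X$.

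The main step is $\D R^G_X(0)=\id$. By the chain rule,
\[
\D R^G_X(0)[\eta] = \sqrt{G}^{-1}\,\D\qf(Y)[\sqrt{G}\eta], \qquad Y=\sqrt{G}X,
\]
so it suffices to show $\D\qf(Y)[\zeta]=\zeta$ for $\zeta\in T_Y\!\St(p,\H^n)$. I will verify this directly. Write $Y+t\zeta = Q(t)R(t)$ with $Q(0)=Y$ and $R(0)=I_p$, and differentiate at $t=0$ to get $\zeta = \dot Q + Y\dot R$. Differentiating $Q^HQ=I_p$ shows that $Y^H\dot Q$ is skew-Hermitian. The tangency condition shows that $Y^H\zeta$ is skew-Hermitian. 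Therefore $\dot R = Y^H\zeta - Y^H\dot Q$ is skew-Hermitian. Because of the normalization, $\dot R$ is also upper triangular with real diagonal. A skew-Hermitian matrix has purely imaginary diagonal entries, so the diagonal of $\dot R$ is zero, and being skew-Hermitian and upper triangular then forces $\dot R=0$. Hence $\dot Q=\zeta$.

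The step I expect to be the main obstacle is justifying smoothness of $\qf$ on full-rank quaternionic matrices, which is needed for the derivative to exist. I will handle it through the Cholesky-type formula $R=\mathrm{chol}(A^HA)$ and $Q=AR^{-1}$. Gram--Schmidt coefficients are rational functions of the real entries and of square roots of positive quantities, so both factors are smooth. Smoothness of $R^G$ in $(X,\eta)$ then follows, and $R^G$ is a retraction.
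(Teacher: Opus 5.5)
Your proposal is correct and rests on the same identity as the paper, $R^G_X(\eta)=\Phi\bigl(R_{\Phi^{-1}(X)}(\D\Phi^{-1}(X)[\eta])\bigr)$, where $R$ is the QR retraction on $\St(p,\H^n)$ from Wang--Yang; the paper then concludes at once by citing the general transfer result for retractions under a diffeomorphism (Theorem~3.2 of Sato--Aihara). Your direct check of the retraction axioms is sound and replaces both citations, which makes the argument self-contained and shows where tangency and the positive-real-diagonal normalization are used: full right column rank comes from $A^HA=I_p+\eta^HG\eta$ (the paper notes this only in the remark that follows), $R^G_X(0)=X$ comes from uniqueness of the normalized QR factorization, $\D\qf(Y)[\zeta]=\zeta$ follows because $\dot R$ is both skew-Hermitian and upper triangular with real diagonal, and smoothness comes from Gram--Schmidt.
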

\begin{proof}
Let $R \colon T\! \St(p,\H^n) \to \St(p,\H^n)$ be a map defined by $R_X(\eta) \coloneqq \qf(X + \eta)$
for $X \in \St(p,\H^n)$ and $\eta \in T_X \!\St(p,\H^n)$.
This is shown to be a retraction on $\St(p,\H^n)$~\cite{WangYang2026}.
For the diffeomorphism $\Phi \colon \St(p,\H^n) \to \St_G(p,\H^n)$, we have
$R^G_X(\eta) = \sqrt{G}^{-1}R_{\sqrt{G}X}(\sqrt{G}\eta)= \Phi(R_{\Phi^{-1}(X)}(\D \Phi^{-1}(X)[\eta]))$.
From general theory (Theorem~3.2 in~\cite{SatoAihara2019}), this means that $R^G$ is a retraction on the generalized quaternionic Stiefel manifold $\St_G(p,\H^n)$.
\end{proof}
\begin{remark}
For $X \in \St_G(p,\H^n)$ and $\eta \in T_X\!\St_G(p,\H^n)$, the matrix
$(\sqrt{G}(X+\eta))^H(\sqrt{G}(X+\eta)) = I_p + \eta^H G\eta$
is positive definite.
Hence, $\sqrt{G}(X+\eta)$ has full right column rank, and therefore its QR decomposition is unique.
\end{remark}
A vector transport is used, e.g., in the Riemannian conjugate gradient method~\cite{Absil2008,Sato2021,sato2022riemannian}.
For example, the map $\mathcal{T} \colon T\!\St_G(p,\H^n) \oplus T\!\St_G(p,\H^n) \to T\!\St_G(p,\H^n)$ ($\oplus$ denotes the Whitney sum) defined by
$\mathcal{T}_{\eta}(\xi) \coloneqq P_{X_+}^{M_{X_+}}(\xi)$
for $X \in \St_G(p,\H^n)$ and $\eta, \xi \in T_X \!\St_G(p,\H^n)$ with $X_+ \coloneqq R^G_X(\eta)$ is a vector transport from general theory~\cite{Absil2008}, which is computed using the orthogonal projection onto the tangent space at $X_+$.
\section{Applications and numerical results}
\label{sec:applications}
In this section, we deal with the quaternionic GEVP and quaternionic CCA as applications of optimization on the generalized quaternionic Stiefel manifold.
We demonstrate numerical experiments illustrating optimization on the generalized quaternionic Stiefel manifold.
All computations were performed in double-precision floating-point arithmetic on a computer
(Apple M4 Pro, 48 GB RAM) with MATLAB R2024b.
For quaternionic computations, we used QTFM (Version 3.7)~\cite{QTFM2025},
and for Riemannian optimization, we used Manopt 8.0~\cite{boumal2014manopt}.
\subsection{Generalized quaternionic eigenvalue problem}
\label{subsec:GEVP}
For $A, G \in \Her(n)$ with $G$ being positive definite, we say that $\lambda \in \H$ is a right generalized eigenvalue~\cite{hong2021inequalities} and $x \in \H^n - \{0\}$ is an associated right generalized eigenvector if they satisfy
\begin{equation}
\label{eq:GeneralizedEigenvalue}
Ax = Gx\lambda.
\end{equation}
In this case, any eigenvalue $\lambda$ turns out to be a real value~\cite{hong2021inequalities}, and \eqref{eq:GeneralizedEigenvalue} is, in fact, equivalent to
$Ax = \lambda Gx$.
We can formulate the problem of finding $p$ generalized eigenvectors associated with the $p$ smallest generalized eigenvalues as an optimization problem on $\St_G(p,\H^n)$ as follows.
\begin{proposition}
For $A, G \in \Her(n)$ with $G$ being positive definite, let $X^*$ be an optimal solution to the optimization problem
\begin{equation}
\label{prob:opt}
\min_{X \in \St_G(p,\H^n)} \re(\tr(X^HAXN))
\end{equation}
on $\St_G(p,\H^n)$, where $N = \diag(\mu_1, \mu_2, \dots, \mu_p)$ with $\mu_1 > \mu_2 > \cdots > \mu_p > 0$.
Then, for the generalized eigenvalue problem~\eqref{eq:GeneralizedEigenvalue},
the $r$-th column vector of $X^*$ is a generalized eigenvector associated with the $r$-th smallest generalized eigenvalue $\lambda_r$ (counted with multiplicity).
\end{proposition}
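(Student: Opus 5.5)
The plan is to reduce the problem to a standard quaternionic Hermitian eigenvalue problem and then use first-order optimality together with a majorization (von Neumann/Ky Fan type) argument. Substitute $Y = \sqrt{G}X$, as in the proof that $\St_G(p,\H^n) = \Phi(\St(p,\H^n))$. Set $\tilde{A} \coloneqq \sqrt{G}^{-1}A\sqrt{G}^{-1} \in \Her(n)$. Then problem~\eqref{prob:opt} becomes
\begin{equation}
\min_{Y \in \St(p,\H^n)} \re(\tr(Y^H\tilde{A}YN)).
\end{equation}
Moreover, $Ax = \lambda Gx$ with real $\lambda$ is equivalent to $\tilde{A}y = y\lambda$ with $y = \sqrt{G}x$. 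Hence the generalized eigenvalues $\lambda_1 \le \dots \le \lambda_n$ are exactly the (real) right eigenvalues of $\tilde{A}$, and it suffices to prove the claim for $G = I$. We then diagonalize $\tilde{A} = U\Lambda U^H$ with $U$ unitary and $\Lambda = \diag(\lambda_1,\dots,\lambda_n)$ real and ascending, as in the proof of Lemma~\ref{lem:Sylvester}.

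\textbf{Step 1 (first-order conditions).} At a minimizer, the Riemannian gradient vanishes. Take $M_X \equiv G$ and use Corollary 1 with Proposition~\ref{prop:grad}. The Euclidean gradient of $\re(\tr(X^HAXN))$ is $2AXN$, because $A$ is Hermitian and $N$ is real diagonal. The condition $\grad f(X^*)=0$ therefore reads
\begin{equation}
AX^*N = GX^*\her(X^{*H}AX^*N).
\end{equation}
Multiplying on the left by $X^{*H}$ shows that $W \coloneqq X^{*H}AX^*$ satisfies $WN = \her(WN)$, i.e. $WN = NW$. Since $W$ is Hermitian and the $\mu_r$ are distinct, this forces $w_{rs}(\mu_s - \mu_r) = 0$, where the $\mu$'s are real and so commute with quaternions. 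Hence $W$ is diagonal with real diagonal entries $d_r = x_r^{*H}Ax_r^*$. Substituting back gives $Ax_r^*\mu_r = Gx_r^* d_r\mu_r$, so each column $x_r^*$ is a generalized eigenvector with eigenvalue $d_r$.

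\textbf{Step 2 (identifying which eigenvalues).} We show that $d_r = \lambda_r$. By Step 1, the columns of $Y^* = \sqrt{G}X^*$ are orthonormal eigenvectors of $\tilde{A}$, so $\{d_r\}$ is a sub-multiset of the spectrum, counted with multiplicity. The objective value is $\sum_r \mu_r d_r$. To get a lower bound valid for every feasible $Y$, write
\begin{equation}
\re(\tr(Y^H\tilde{A}YN)) = \sum_{r,k} \mu_r \lambda_k |(U^HY)_{kr}|^2 = \sum_{r,k}\mu_r\lambda_k P_{kr}.
\end{equation}
The matrix $P = [|(U^HY)_{kr}|^2]$ has column sums $1$ and row sums at most $1$, so it is doubly substochastic. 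Minimizing this linear function over such $P$ is a transportation problem. Because the $\mu_r$ are strictly decreasing and positive and the $\lambda_k$ are ascending, a rearrangement/exchange argument gives the minimum $\sum_r \mu_r\lambda_r$. It is attained by the first $p$ eigenvectors.

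Comparing with $\sum_r \mu_r d_r$, the rearrangement inequality forces $d_r = \lambda_r$. First, the multiset $\{d_r\}$ must consist of the $p$ smallest eigenvalues; otherwise, swapping in a smaller unused one strictly decreases the sum, since $\mu_r > 0$. Second, the $d_r$ must be sorted ascending, since $\mu$ is strictly decreasing and any inversion with $d_r \ne d_s$ can be swapped to lower the value.

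The main obstacle is Step 2's lower bound, namely the doubly-substochastic/rearrangement argument carried out carefully over $\H$. I expect it to go through because $|(U^HY)_{kr}|^2$ is real and the row and column sum properties follow from $U^HY$ having orthonormal columns with orthonormal completion. Ties among the $\lambda$'s are harmless, since we only need $d_r = \lambda_r$ as values.
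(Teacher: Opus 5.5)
Your proposal is correct and is essentially the argument the paper intends. The paper's proof only observes that the generalized eigenvalues are real, so they commute with quaternionic matrices, and then defers to the real-case Brockett-cost argument; you carry that argument out explicitly over $\mathbb{H}$ via $Y=\sqrt{G}X$, the stationarity identity $WN=NW$, and a rearrangement step.

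One minor simplification: since $X^*$ is assumed globally optimal and every ordered choice of orthonormal eigenvectors of $\sqrt{G}^{-1}A\sqrt{G}^{-1}$ is feasible, your swap arguments alone force $d_r=\lambda_r$. The doubly substochastic lower bound over all feasible $Y$ is therefore not needed; the row-sum bound on $P$ is useful only to justify that $\{d_r\}$ is a sub-multiset of the spectrum.
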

\begin{proof}
Since any generalized eigenvalue $\lambda$ in~\eqref{eq:GeneralizedEigenvalue} is a real value and commutes with any quaternionic matrix, the proof of the proposition is completed in the same way as the real case~\cite{Boumal2023,Edelman1998}.
\end{proof}
We considered Problem~\eqref{prob:opt} with $n = 1000$ and $p = 10$.
We first generated random quaternionic matrices $U, V \in \H^{n \times n}$ whose four real components were independently sampled from the standard normal distribution, and set
$A = (U + U^H)/2$ and $G = V^H V + I_n$.
We also set $N = \diag(p, p-1, \dots, 1)$.
As a Riemannian metric, we set $M_X \equiv G$ for all $X \in \St_G(p,\H^n)$.
On $\St_G(p,\H^n)$, we compared the Riemannian steepest descent (SD), conjugate gradient (CG), and trust-region (TR) methods implemented in Manopt.
The same randomly generated initial point $X_0 \in \St_G(p,\H^n)$ was used for all three solvers for a fair comparison.
The maximum number of iterations was set to $250$, and the stopping criterion was $\|\grad f(X_k)\|_{X_k} < 10^{-6}$.
For the trust-region method, we set the initial trust-region radius to $\sqrt{p}$.
All the other parameters were left at their default values in Manopt.
The Riemannian gradient norm of $f$ versus the iteration number is shown in Fig.~\ref{fig1}, which
indicates that optimization on $\St_G(p,\H^n)$ works well.
Furthermore, for $\tilde{X}$ obtained at the termination of TR and $\tilde{\Lambda} \coloneqq \tilde{X}^HA\tilde{X}$, we observed $\|\tilde{X}^H G\tilde{X}-I_p\|_F = 4.99 \times 10^{-13}$ (feasibility residual),
$\|\tilde{\Lambda} - \diag(\tilde{\Lambda})\|_F = 1.96 \times 10^{-9}$ (off-diagonal residual), and
$\|A\tilde{X} - G\tilde{X}\diag(\tilde{\Lambda})\|_F = 3.93 \times 10^{-8}$ (generalized eigenvalue residual).
All three residuals are close to zero.
\begin{figure}[thpb]
    \centering
    \includegraphics[width=.65\columnwidth]{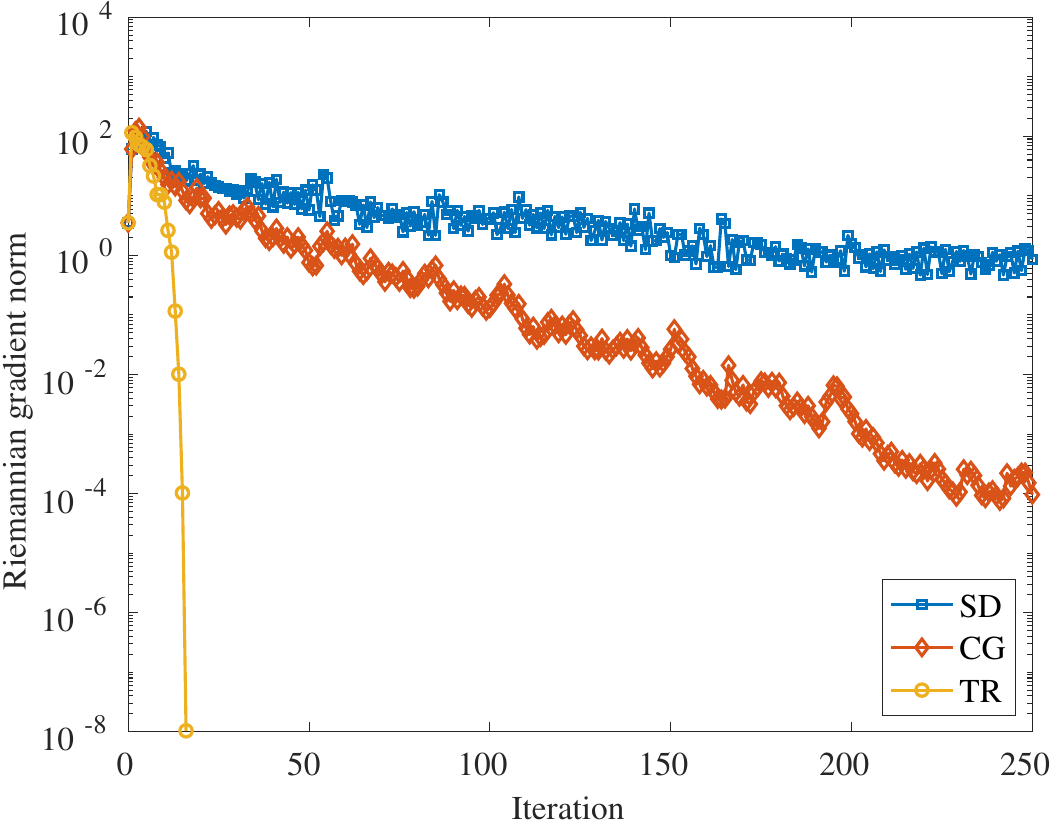}
    \caption{Riemannian gradient norm versus iterations.}
    \label{fig1}
\end{figure}
As another experiment, to illustrate that the proposed framework is not restricted to the metric $M_X \equiv G$, we also solved the same quaternionic
GEVP using a canonical-type $X$-dependent metric $M_X^{\rm can}$.
This metric is obtained from the canonical metric on the
standard quaternionic Stiefel manifold through $Y=\Phi^{-1}(X) = \sqrt{G}X$ as
\[
    M_X^{\rm can}
    =
    \sqrt{G}\left(I_n-\frac12YY^H\right)\sqrt{G} = G - \frac{1}{2}GXX^HG,
\]
which is Hermitian positive definite for any $X \in \St_G(p,\H^n)$.
Moreover, we have $(M_X^{\rm can})^{-1}=G^{-1}+XX^H$.
We compared the metric \(M_X\equiv G\) and canonical-type
metric $M_X = M_X^{{\rm can}}$ using CG with the same
initial point.
For this comparison, we report the feasibility residual $r_{\rm feas}$, off-diagonal residual $r_{\rm off}$, and generalized eigenvalue residual $r_{\rm eig}$.
\begin{table}[t]
\caption{GEVP solved by CG with different Riemannian metrics.}
\label{tab:gevp-metrics}
\centering
\begin{tabular}{lcccc}
\hline
$M_X$ & time [s]
& \(r_{\rm feas}\) & \(r_{\rm off}\) & $r_{\rm eig}$\\
\hline
$G$ & 36.57 & \(4.77\times10^{-13}\) & \(1.74\times10^{-5}\) & $9.39\times10^{-4}$\\
$M_X^{{\rm can}}$ & 42.69 & \(5.20\times10^{-13}\) & \(6.06\times10^{-6}\) & $2.79 \times 10^{-4}$\\
\hline
\end{tabular}
\end{table}
CG methods with both metrics were terminated after 250 iterations and reached essentially the same objective value as $-280.5710$.
Table~\ref{tab:gevp-metrics} shows that both sufficiently preserve the generalized quaternionic orthogonality constraint.
In this instance,
the canonical-type metric gives smaller off-diagonal and generalized
eigenvalue residuals, although it requires more computational time
because applying \((M_X^{\rm can})^{-1}\) involves solving a linear
system with \(G\). The purpose of this comparison is not to claim that
the canonical-type metric is uniformly superior, but to demonstrate
that the proposed \(M_X\)-dependent framework can be used with a
genuinely \(X\)-dependent metric.
\subsection{CCA with quaternion-linear projections}
We next discuss a version of the quaternionic CCA problem that naturally leads to the product of generalized quaternionic Stiefel
manifolds.
In the real case, CCA with multiple canonical directions can be formulated as an optimization problem on the product of generalized Stiefel manifolds~\cite{ablin2024infeasible,Yger2012}.
Quaternion extensions of CCA and other statistical methods have also been studied in the widely-linear framework for quaternion random vectors~\cite{via2010properness}.
For jointly Q-proper vectors, the optimal CCA projections reduce to conventional linear processing (see~\cite{via2010properness} for details of the terminology).
Motivated by these results, we consider the CCA problem restricted to quaternion-linear projections below.
In what follows, let \(x\in\mathbb H^{m}\) and \(y\in\mathbb H^{n}\) be zero-mean quaternion-valued random vectors.
Here, we note that quaternions can be used to express data with a $3$- or $4$-dimensional structure, e.g., data in a real $3$-dimensional space (with scalars for $4$-dimensional version).
For example, let $q=T+v_xi+v_yj+v_zk \in\mathbb H$, where $T$ is temperature (a scalar) and $(v_x,v_y,v_z)$ is the three-dimensional wind velocity.
Such quaternion-valued
representations have been used for joint modeling and forecasting of three-dimensional wind and atmospheric parameters~\cite{took2011quaternion}.
Suppose that $x\in\mathbb H^{m}$ consists of such quaternion-valued variables at several spatial locations, and $y\in\mathbb H^{n}$ consists of the corresponding outputs of
a numerical prediction model.
We consider quaternionic linear combinations of the form $u=U^Hx$ and $v=V^Hy$, where $U\in\mathbb H^{m\times p}$ and $V\in\mathbb H^{n\times p}$.
Let $C_{xx}=\mathbb E[xx^H]$, $C_{yy}=\mathbb E[yy^H]$, and $C_{xy}=\mathbb E[xy^H]$, where we assume that $C_{xx}$ and $C_{yy}$ are positive definite.
Then, we have $\mathbb E[uu^H]=U^HC_{xx}U$, $\mathbb E[vv^H]=V^HC_{yy}V$, and $\mathbb E[uv^H]=U^HC_{xy}V$.
The normalization conditions for $u$ and $v$ are $U^HC_{xx}U=I_p$ and $V^HC_{yy}V=I_p$, which are equivalent to $(U, V) \in \St_{C_{xx}}(p,\H^m) \times \St_{C_{yy}}(p,\H^n) \eqqcolon \mathcal{M}$.
The generalized
quaternionic Stiefel manifolds thus naturally arise.
Following the formulation of real CCA~\cite{Yger2012}, we consider the following optimization problem on $\mathcal{M}$:
\begin{equation}
\max_{(U, V) \in \mathcal{M}}\re(\tr(U^HC_{xy}VN)),
\label{qCCA}
\end{equation}
with $N=\diag(\nu_1, \nu_2, \ldots,\nu_p)$ ($\nu_1> \nu_2 > \cdots > \nu_p>0$).
\begin{remark}
It is worthwhile to clarify the relation between \eqref{qCCA} and applying real CCA to the real representation of quaternion-valued data.
If quaternion-valued data are identified with real vectors and
ordinary real CCA is applied, the four components of each
quaternion and the resulting
canonical variables are real-valued.
In contrast, \eqref{qCCA} uses
quaternion-linear projections $u=U^Hx$ and $v=V^Hy$ to
directly extract quaternion-valued canonical variables.
The real
representation of a quaternion-linear projection by $U$ is indeed a real
linear map, but it is not arbitrary: it has a special
block structure determined by the four real component matrices of $U$.
Thus, \eqref{qCCA} is a structured formulation
that imposes quaternion-linear structure on the admissible projections. \end{remark}
For numerical experiments on the CCA problem~\eqref{qCCA}, we used a synthetic instance with a known
optimum.
We generated random Hermitian positive definite matrices
$C_{xx}=H_x^HH_x+I_{m}$ and $C_{yy}=H_y^HH_y+I_{n}$.
Then, we generated $(U_\star, V_\star)\in\mathcal{M}$ and set $C_{xy}=C_{xx}U_\star\Sigma V_\star^HC_{yy}$ with $\Sigma=\diag(0.95,0.80,0.65,0.50,0.35)$, $m=80$, $n=70$, $p=5$, and $N=\diag(5,4,3,2,1)$.
Since $U_\star^HC_{xx}U_\star=I_p$ and $V_\star^HC_{yy}V_\star=I_p$, the optimal value of
$\max_{(U,V)\in \mathcal{M}}\re(\tr(U^HC_{xy}VN))$ is $\tr(\Sigma N)=11.25$.
We solved the equivalent minimization problem, i.e., the minimization of $-\re(\tr(U^HC_{xy}VN))$, using SD, CG, and TR with the same initial point.
The maximum number of iterations was 250, and the
stopping tolerance for the gradient norm was $10^{-6}$.
The Riemannian metrics were $M_U\equiv C_{xx}$ and $M_V\equiv C_{yy}$.
Table~\ref{tab:qcca} reports the relative objective gap
$r_{\rm gap} = |\re(\tr(U^HC_{xy}VN)) -\tr(\Sigma N)| / \tr(\Sigma N)
    $
and feasibility residual $r_{\rm feas} = \max\{\|U^HC_{xx}U-I_p\|_F,\|V^HC_{yy}V-I_p\|_F\}$.
\begin{table}[t]
\caption{CCA with quaternion-linear projections.}
\label{tab:qcca}
\centering
\begin{tabular}{lcccc}
\hline
method & iter. & time [s] & $r_{\rm gap}$ & $r_{\rm feas}$ \\
\hline
SD & 250 & 3.76 &$1.76\times10^{-7}$ & $7.45\times10^{-16}$ \\
CG & 100 & 2.53 & $1.31\times10^{-13}$ & $7.02\times10^{-16}$ \\
TR & 10 & 0.94 & $3.16\times10^{-16}$ & $6.46\times10^{-16}$ \\
\hline
\end{tabular}
\end{table}
SD reached the maximum number of iterations, but already
attained a small objective gap.
CG and TR satisfied the gradient tolerance.
These results show that the proposed framework can solve the
CCA problem on $\mathcal{M}$
while preserving the covariance-weighted orthogonality constraints.
\section{Concluding remarks}
\label{sec:conclusion}
In this paper, we defined the generalized quaternionic Stiefel manifold and examined its geometry.
The results are analogous to those for the real and complex generalized Stiefel manifolds when the base field is replaced with $\mathbb{H}$, and also analogous to those for the quaternionic Stiefel manifold when the standard inner product on $\mathbb{H}^n$ is replaced with one defined by a general Hermitian positive definite matrix $G$.
Specifically, we regarded $\H^{n \times p}$ as a real $4np$-dimensional vector space, endowed it with a Riemannian metric defined by a general positive definite matrix $M_X$ at each $X$, and regarded $\St_G(p,\H^n)$ as a real Riemannian submanifold of $\H^{n \times p}$.
Then, we derived explicit formulas for various concepts required for Riemannian optimization on the manifold.
As applications, the quaternionic GEVP and CCA problem with quaternion-linear projections were formulated as optimization problems on the generalized quaternionic Stiefel manifold and on the product of two generalized quaternionic Stiefel manifolds, respectively.
Numerical results indicated that optimization algorithms on the manifolds work well and appropriately solve the optimization problems.
\section*{Acknowledgment}
The author would like to thank the anonymous reviewers for their valuable comments that helped improve the paper.

\end{document}